\newcommand\NN{\mathbb{N}}
\DeclareMathOperator\mex{mex}
\newtheorem{thm}{Theorem}[section]
\newtheorem{lemma}[thm]{Lemma}
\newtheorem{cor}[thm]{Corollary}
\title{Winning strategies for aperiodic subtraction games}
\author{%
	Alan Guo\\%
	\small MIT Computer Science and Artificial Intelligence Laboratory\\%
	\small Cambridge, MA 02139, USA\\%
	\small \texttt{aguo@mit.edu}
}
\date{}
\begin{document}
\maketitle

\begin{abstract}
We provide a winning strategy for sums of games of \textsc{Mark}-$t$,
an impartial game played on the nonnegative integers where each move
consists of subtraction by an integer between 1 and $t-1$ inclusive,
or division by $t$, rounding down when necessary.  Our algorithm
computes the Sprague-Grundy values for arbitrary $n$ in quadratic
time. This solves a problem posed by Aviezri Fraenkel.
In addition, we characterize the P-positions and N-positions for
the game in mis\`ere play.
\\
\\
\noindent Keywords: Combinatorial games, Subtraction games,
Complexity, Aperiodicity
\end{abstract}

%%%%%%%%%%%%%%%%%%%%%%%%%%%%%%%%%%%%%%%%%%%%%%%%%%%%%%%%%%%%%%%%%%%%%%%%

\section{Introduction}

The impartial game \textsc{Mark}, introduced in~\cite{fraenkel}, is
played on nonnegative integers, where the options of $n$ are $n-1$ and
$\lfloor n/2 \rfloor$. In \emph{normal play}, the first player unable
to move loses. Those integers from which the \textbf{N}ext player to
play has a winning strategy are N-positions, whereas those from which
the \textbf{P}revious player has a winning strategy are
P-positions. As shown in~\cite{fraenkel}, the P-positions and
N-positions for \textsc{Mark} in normal play have an extremely nice
characterization: $n$ is a P-position if and only if its binary
representation has an odd number of trailing 0's.

The \emph{sum} of games is a collection of games such that a player
moves by selecting one of the component games and making a legal move
in it. A player is unable to move when no component game has any move
left. Just knowing the P- and N-positions for the component games is
insufficient for analyzing the positions of the sum. In normal play,
we use the \emph{Sprague-Grundy} function. In Sprague-Grundy theory,
every impartial game in normal play is equivalent to a \textsc{Nim}
heap of some size, called its \emph{Sprague-Grundy value}, or
$g$-value for short~\cite{sprague, grundy}. In particular, a game is a
P-position if and only if its $g$-value is~$0$. The purpose of the
$g$-function is that the $g$-value of a sum of games is equal to the
bitwise XOR or the $g$-values of the component games, which allows us
to compute the outcome of a given sum of games.

The $g$-values of a game can be computed recursively with the
\emph{mex rule}. We define the mex (\textit{m}inimal
\textit{ex}cludant) function as follows: if $S \subseteq \NN =
\{0,1,2,\ldots\}$, then $\mex S = \min (\NN \setminus S)$, i.e. the
least nonnegative integer not in~$S$. We can then compute the
$g$-value of a game as follows. If $u$ is a position of a game with a
set $S_u$ of options, then $g(u) = \mex g(S_u)$. However, computing
$g$-values this way is computationally inefficient for games such as
\textsc{Mark}, since computing $g(n)$ is $O(n)$, which is exponential
in the input length $\log_2 n$.  Fortunately, \cite{fraenkel} gives an
elegant and simple method for computing $g(n)$.  First, note that
$g(n) \in \{0,1,2\}$, since each position has at most $3$ options.
Fraenkel showed that
$$
g(n) =
\begin{cases}
0 & \text{if $n$ has an odd number of trailing 0's in binary} \\
1 & \text{if $n$ has an even number of trailing 0's and an odd number of
  1's in binary}\\
2 & \text{if $n$ has an even number of trailing 0's
  and an even number of 1's in binary}.
\end{cases}
$$
This gives a linear time algorithm for computing $g(n)$, and hence
a linear time algorithm for computing the $g$-value of a sum of games
of \textsc{Mark}.

In~\cite{fraenkel}, \textsc{Mark} was generalized into the family of
games \textsc{Mark}-$t$, parametrized by integer $t \ge 2$. In
\textsc{Mark}-$t$, a player may move from $n$ to any one of
$n-1,n-2,\ldots, n-(t-1), \lfloor n/t \rfloor$. In particular,
\textsc{Mark} is the special case where $t=2$. It has been shown that
subtraction games, both impartial ~\cite{winningways} and
partizan~\cite{partizansub}, in which the amount subtracted is
restricted to constants, are \emph{periodic} in the sense that their
$g$-values are periodic. The importance of periodicity for \emph{octal
  games} is that it implies there is a polynomial-time winning
strategy~\cite{winningways}.  However, for any $t \ge 2$, the
subtraction game \textsc{Mark}-$t$ is
\emph{aperiodic}~\cite{fraenkel}, yet has a polynomial-time algorithm
for determining whether a given position is P or N.  In
Section~\ref{s:normal} of this paper, we complete the picture by
giving a polynomial-time algorithm for computing the Sprague-Grundy
function for \textsc{Mark}-$t$, giving us a polynomial-time winning
strategy for sums of positions of \textsc{Mark}-$t$.

In \emph{mis\`ere play}, the winning condition is reversed, i.e., the
first player unable to move wins rather than loses. The P- and
N-positions of mis\`ere \textsc{Mark}, denoted \textsc{MiMark}, have
been characterized~\cite{fraenkel}. In Section~\ref{s:misere}, we
extend the characterization to general \textsc{Mark}-$t$ in mis\`ere
play, which we denote \textsc{MiMark}-$t$.

%%%%%%%%%%%%%%%%%%%%%%%%%%%%%%%%%%%%%%%%%%%%%%%%%%%%%%%%%%%%%%%%%%%%%%%%

\section{Mark-$t$ in normal play}\label{s:normal}

The case $t = 2$ has been dealt with in~\cite{fraenkel}, so we fix $t
\ge 3$.  For notation, let $R(n)$ denote the representation of $n$
written in base~$t$.  We begin by noting that the P-positions of
\textsc{Mark}-$t$ are precisely the \emph{dopey numbers} (numbers with
an odd number of trailing 0's) when written in
base~$t$~\cite{fraenkel}. Building upon this, we have the following
result.

\begin{thm}\label{t:easy}
For $k \in \{0,1,\ldots,t-2\}$, $g(n) = k$ if and only if $R(n)$ has
an odd number of trailing $k$'s.
\end{thm}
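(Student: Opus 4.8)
The plan is to prove the equivalence for all $k\in\{0,1,\ldots,t-2\}$ simultaneously by strong induction on $n$; in particular the case $k=0$ (the known characterization of P-positions) is subsumed. First I would fix notation for the options of $n$. Write $n=tm+r$ with $m=\lfloor n/t\rfloor$ and $r\in\{0,\ldots,t-1\}$ the last digit of $R(n)$, and assume $n\ge t$ so that $m\ge 1$. Subtracting $1,\ldots,r$ produces the \emph{low} options $tm+d$ for $d\in\{0,\ldots,r-1\}$, whose representation is $R(m)$ followed by the digit $d$; subtracting $r+1,\ldots,t-1$ produces, after a borrow, the \emph{high} options $t(m-1)+d$ for $d\in\{r+1,\ldots,t-1\}$, with representation $R(m-1)$ followed by $d$; and dividing produces $m$. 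Let $c$ denote the last digit of $R(m)$. The base case $n<t$ is immediate: the options are then exactly $\{0,1,\ldots,n-1\}$, so $g(n)=n$, while $R(n)$ has an odd number of trailing $k$'s precisely when $n=k$.

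The first real step is an auxiliary bound: every value $0,1,\ldots,r-1$ occurs among the $g$-values of the options of $n$, and hence $g(n)\ge r$. Indeed, each low option $R(m)d$ with $d\neq c$ has exactly one trailing $d$, so by the inductive hypothesis it has $g$-value $d$ (note $d\le r-1\le t-2$); and if $c\le r-1$, then either $\tau_c(m)$ is even, in which case $R(m)c$ has an odd number of trailing $c$'s so $g(tm+c)=c$, or $\tau_c(m)$ is odd, in which case $g(m)=c$ by induction and $m$ is itself an option. So every value in $\{0,\ldots,r-1\}$ is realized.

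For the forward implication, assume $\tau_k(n)$ is odd with $k\le t-2$; then $r=k$, and since $\tau_r(n)=1+\tau_r(m)$ the number $\tau_r(m)$ is even. By the auxiliary bound $0,\ldots,r-1$ are all realized among the options, so it suffices to show $r$ is not. An option with $g$-value $r$ would, by induction, end in the digit $r$; but low options end in a digit below $r$ and high options in one above $r$, so such an option could only be $m$, which would force $\tau_r(m)$ odd, a contradiction. Hence the mex of the options' $g$-values is $r$, i.e.\ $g(n)=r=k$. For the reverse implication, assume $g(n)=k\le t-2$; the bound gives $k\ge r$. If $k=r$, then $\tau_r(m)$ must be even (otherwise $g(m)=r$ is realized by the division option while the mex equals $r$), so $\tau_r(n)=1+\tau_r(m)$ is odd, as wanted. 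If instead $k>r$, I aim for a contradiction: since the mex is $k>r$, the value $r$ is realized by some option, necessarily $m$ with $g(m)=r$ and $\tau_r(m)$ odd; I then show that the high options realize every value in $\{r+1,\ldots,t-2\}$, so that every value $0,1,\ldots,t-2$ occurs among the $g$-values of the options of $n$ (using the bound for $0,\ldots,r-1$ and the division option for $r$), whence $g(n)\ge t-1>k$.

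The step I expect to require the most care is this last claim, that the high options $R(m-1)d$ realize all of $\{r+1,\ldots,t-2\}$, since it hinges on the digits of $m-1$. If $r\ge 1$ it is easy: $R(m)$ ends in $r$, so $R(m-1)$ ends in $r-1$, which differs from every $d\ge r+1$, and therefore $R(m-1)d$ has a single trailing $d$ and $g$-value $d$ by induction. If $r=0$ one uses that $\tau_0(m)$ is odd, so $R(m)$ ends in a positive, odd-length block of $0$'s; then $R(m-1)$ ends in the corresponding block of $(t-1)$'s, whose last digit $t-1$ again differs from every $d\in\{1,\ldots,t-2\}$, so $R(m-1)d$ has a single trailing $d$ and $g$-value $d$. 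Beyond this, the routine points to check are the degenerate cases (such as $m=1$, or $R(m-1)$ being very short, and $r=t-1$, where there are no high options but the bound already gives $g(n)\ge t-1$), and the bookkeeping that every appeal to the inductive hypothesis is to a value at most $t-2$, which holds because the digits involved are always $\le r-1$ or lie in $\{r+1,\ldots,t-2\}$.
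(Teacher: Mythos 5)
Your proof is correct (I checked the cases, including the edge cases you flag: $m=1$, $r=t-1$, and the borrow when $r=0$), but it is organized quite differently from the paper's. The paper does a double induction, first on $k$ with the $k=0$ base case imported from Fraenkel's characterization of the P-positions as the dopey numbers, then on $n$, and in each direction it splits into several subcases according to the digit $j$ preceding the trailing block of $k$'s and the parity of the block of $j$'s, exhibiting explicit move sequences in each subcase. You instead run a single strong induction on $n$ for all $k\le t-2$ simultaneously (so the $k=0$ base case is reproved rather than cited), and you replace most of the paper's casework with two uniform observations: the classification of options by their last digit (low options end in $d<r$, high options in $d>r$, plus the division option $m$), giving the bound $g(n)\ge r$ where $r$ is the last digit of $R(n)$; and the localization principle that, by the inductive hypothesis, an option with $g$-value $v\le t-2$ must end in the digit $v$, which immediately pins down the division option $m$ as the only candidate for realizing the value $r$. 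What the paper's approach buys is an explicit, move-by-move picture of which subtraction achieves which $g$-value in each digit configuration (which foreshadows the constructive style of Theorem~\ref{t:hard}); what yours buys is a self-contained and shorter argument in which the case analysis on the preceding digit $j$ and its block parity disappears entirely, surviving only in the one place it is genuinely needed, namely showing that $R(m-1)$ ends in a digit different from every $d\in\{r+1,\ldots,t-2\}$.
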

\begin{proof}
We prove this by induction on $k$ and $n$. The base case $k = 0$ for
the equivalence is given by the fact that the set of P-positions is
precisely the set of dopey numbers in base~$t$. Now fix $k > 0$ and
suppose the equivalence holds for smaller values of $k$.  We now
induct on $n$. The base case for the reverse implication is given by
$g(k) = k$, since $k$ has options $0,1,\ldots,k-1$ which by induction
have $g$-values $0,1,\ldots,k-1$ respectively, using the mex rule. The
base case for the forward implication is given by $g(0) = 0 \ne k$.

Now suppose $n > k$ and the equivalence holds for smaller values of
$n$. First suppose $R(n)$ has an odd number $r$ of trailing $k$'s.
We have two cases:
\begin{enumerate}[(i)]
\item%
$r > 1$. The options $n-1,n-2,\ldots,n-k$ have $g$-values
  $k-1,k-2,\ldots,0$ respectively, by the inductive hypothesis, and
  for $i \in \{k+1,\ldots,t-1\}$, the option $m = n-i$ has exactly one
  trailing $t+k-i$ preceded by $k-1 \pmod t$, so $R(m-(t-i))$ has
  exactly one trailing $k$, hence $g(m-(t-i)) = k$ and so $g(m) \ne
  k$. Furthermore the option $m = \lfloor n/t \rfloor$ has an even
  number of trailing $k$'s and hence $g(m) \ne n$ by the inductive
  hypothesis.

\item%
$r = 1$. Since $n > k$, the trailing $k$ is preceded by some $j \ne
  k$.  If there are an even number of $j$'s preceding $k$, then the
  argument in case (i) for $\{0,1,\ldots,k-1\} \subset g(S_n)$ still
  goes through. For $i \in \{k+1,\ldots,t-1\}$, $R(n-i)$ ends with
  $(j-1)(t+k-i)$ preceded by an odd number of $j$'s. If $j > k+1$,
  then we can move by subtracting to make the last digit $k$, and if
  $j = k+1$, then we can move by dividing by $t$, making the last
  digit $k$, hence $g(n-i) \ne k$. Finally, $\lfloor n/t \rfloor$ ends
  in zero $k$'s, so its $g$-value is not equal to $k$.

Now suppose there are an odd number of $j$'s preceding $k$. If $j >
k$, then the argument from case (i) shows that $\{0,1,\ldots, k-1\}
\subset g(S_n)$.  If $j < k$, then the only part where this does not
work is when we move to $n-(k-j)$, but then $g(\lfloor n/t \rfloor) =
j$. For $i \in \{k+1,\ldots,t-1\}$, the argument from case (i) goes
through, where $n-i$ ends in $t+k-i$ preceded by $j-1 \ne t+k-i$,
hence $g(n-i) \ne k$. Finally, we already established that $g(\lfloor
n/t \rfloor) = j \ne k$.

\end{enumerate}

Conversely, suppose $R(n)$ has an even number $r$ of trailing
$k$'s. If $r > 0$, then $m = \lfloor n/t \rfloor$ has an odd number of
trailing $k$'s, which by our inductive hypothesis implies $g(m) = k$,
hence $g(n) \ne k$. Therefore, we consider the case $r = 0$.  Write
$R(n) = \ldots d_1 k^j d_2$, where $d_1,d_2 \ne k$ and $d_1$ is
possibly empty.  We have two cases:
\begin{enumerate}[(i)]
\item%
$j = 0$. If $d_2 > k$, then $R(n - (d_2 - k)) = \ldots d_1k$ and by
  our inductive hypothesis $g(n-(d_2-k)) = k$, hence $g(n) \ne k$.  If
  $d_2 < k$, then we have two sub-cases depending on whether $d_1$ and
  $d_2$ are distinct or not. If $d_1 \ne d_2$, then $R(n)$ has exactly
  one trailing $d_2$, and so $g(n) = d_2 \ne k$. If $d_1 = d_2 < k$,
  then $R(n-(t+k-d_2)) = \ldots (d_1-1)k$ which has exactly one
  trailing $k$ and hence $g(n-(t+k-d_2)) = k$, so $g(n) \ne k$.

\item%
$j > 0$. In this case, we have two sub-cases depending on the parity
  of~$j$.  If $j$ is odd, then $R(\lfloor n/t \rfloor)$ has an odd
  number of trailing $k$'s, and so by the inductive hypothesis
  $g(\lfloor n/t \rfloor) = k$, hence $g(n) \ne k$. If $j$ is even,
  then we have two further sub-sub-cases, depending on whether $d_2 <
  k$.  If $d_2 < k$, then by our inductive hypothesis $g(n) = d_2$. If
  $d_2 > k$, then $R(n - (d_2 - k))$ has an odd number $j+1$ of
  trailing $k$'s, and hence $g(n-(d_2-k)) = k$, so $g(n) \ne k$.

\end{enumerate}
\end{proof}

Note that Theorem~\ref{t:easy} does not hold for $k = t-1$. The proof
breaks down, for example, when showing that $R(n)$ has an odd number
of trailing $(t-1)$'s implies $g(n) = t-1$.  Certainly,
$\{0,1,\ldots,t-2\} \subset g(S_n)$ but it is not clear that $t-1
\notin g(S_n)$.

It remains to distinguish between numbers with $g$-values in $\{t-1,
t\}$.  We begin with the following observation.

\begin{lemma}\label{l:delete}
If $R(n) = w k(t-1)^r$ and $R(m) = w k(t-1)$, where $k \ne t-1$, $r >
1$, and $w$ is a (possibly empty) string, then $g(n) = g(m)$ if and
only if $r$ is odd. In other words, deleting extra trailing $(t-1)$'s
beyond the first alternates the $g$-value between $t-1$ and $t$ for
each $(t-1)$ deleted.
\end{lemma}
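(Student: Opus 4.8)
The plan is to induct on $r$, using the divide-by-$t$ move to strip one trailing $(t-1)$ at a time; this reduces everything to a two-line $\mex$ computation. Write $n_r$ for the integer with $R(n_r) = wk(t-1)^r$, so that $m = n_1$ and the $n$ of the statement is $n_r$ for the given $r > 1$. The one structural fact I would isolate at the outset is that $\lfloor n_{r+1}/t\rfloor = n_r$, since flooring a division by $t$ just deletes the last base-$t$ digit.

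The first substantive step is to show $g(n_r) \in \{t-1,t\}$ for every $r \geq 1$. For $r \geq 2$ this is immediate from Theorem~\ref{t:easy}: the options $n_r - 1,\ldots,n_r-(t-1)$ have representations $wk(t-1)^{r-1}(t-1-i)$ whose second-to-last digit is $t-1$ and hence differs from the last, so these options have $g$-values exactly $0,1,\ldots,t-2$; therefore $\mex \geq t-1$, while $\mex \leq t$ because $n_r$ has at most $t$ options. The case $r = 1$ is the one fiddly point, since the second-to-last digit of $n_1$ is $k$: the subtraction options still realize every value in $\{0,\ldots,t-2\}\setminus\{k\}$, and the value $k$ is realized by whichever of $n_1-(t-1-k)$ (representation $wkk$) and $\lfloor n_1/t\rfloor$ (representation $wk$) has an odd number of trailing $k$'s --- exactly one of the two does, because their numbers of trailing $k$'s differ by one --- so again $g(n_1)\in\{t-1,t\}$. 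One could instead invoke the remark following Theorem~\ref{t:easy}, since $n_1$ has exactly one, hence an odd number of, trailing $(t-1)$'s.

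The heart of the argument is the recursion. For $r \geq 1$ the integer $n_{r+1}$ ends in at least two $(t-1)$'s, so just as above its subtraction options have $g$-values precisely $\{0,1,\ldots,t-2\}$, and its only other option is $\lfloor n_{r+1}/t\rfloor = n_r$, whose $g$-value lies in $\{t-1,t\}$ by the previous step. Hence $g(n_{r+1}) = \mex\bigl(\{0,1,\ldots,t-2\}\cup\{g(n_r)\}\bigr)$, which equals $t$ when $g(n_r) = t-1$ and equals $t-1$ when $g(n_r) = t$. In other words $\{g(n_r),g(n_{r+1})\} = \{t-1,t\}$, so deleting one extra $(t-1)$ swaps the $g$-value between $t-1$ and $t$. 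Iterating from $n_1 = m$, we get $g(n_r) = g(m)$ exactly when $r-1$ is even, i.e.\ when $r$ is odd, which is the claim.

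I expect the only real friction to be the $r = 1$ base case of the range claim --- tracking the parity of the trailing $k$'s of $wk$ versus $wkk$ --- together with the routine verification that the subtraction options of a number ending in $t-1$ really do hit all of $0,1,\ldots,t-2$ (which also underlies the remark after Theorem~\ref{t:easy}). Everything after that is the forced $\mex$ computation above.
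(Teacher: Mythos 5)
Your proof is correct and follows essentially the same route as the paper's: the division move makes $n_{r-1}$ an option of $n_r$, so their $g$-values differ, and since both lie in $\{t-1,t\}$ the values must alternate. The only difference is that you carefully justify the claim $g(n_r)\in\{t-1,t\}$ (including the $r=1$ parity check on trailing $k$'s), which the paper asserts in one line, implicitly leaning on the remark following Theorem~\ref{t:easy}.
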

\begin{proof}
By induction, it suffices to show that if $R(m) = wk(t-1)^{r-1}$, then
$g(n) \ne g(m)$. This is easy since $m$ is an option of $n$ (by
dividing by $t$).  Since both of these have $g$-values in $\{t-1,t\}$,
the $g$-values alternate.
\end{proof}

Lemma~\ref{l:delete} allows us to delete any trailing $(t-1)'s$ beyond
the first when we are trying to distinguish between numbers with
$g$-values in $\{t-1,t\}$, so we only need to worry about the cases
where the number of trailing $(t-1)$'s is $\le 1$.

\begin{thm}\label{t:hard}
There is a quadratic-time algorithm for computing $g(n)$ if $n$ ends
in a single $t-1$ or in a positive even number of $k$'s for some $k
\ne t-1$.
\end{thm}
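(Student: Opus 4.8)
The plan is to show that for every $n$ of the two forms in the statement, the option set $S_n$ contains exactly one ``critical'' option $m$ with $g(m)\in\{t-1,t\}$ — an option that can be read directly off the base-$t$ digits of $n$ — and that $g(n)=(2t-1)-g(m)$; iterating this relation drives the representation down to a trivial base case.

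I would first isolate the situation. By Theorem~\ref{t:easy}, $g(n)\le t-2$ precisely when $R(n)$ ends in an odd run of a digit $\le t-2$, so every $n$ ending in a single $t-1$, or in an even run $d^{2s}$ with $s\ge 1$ and $d\le t-2$, has $g(n)\in\{t-1,t\}$; by Lemma~\ref{l:delete} these are the only cases to handle, a trailing run of $\ge 2$ copies of $t-1$ having already been shortened to a single $t-1$. The heart of the matter is then a digit-level analysis of the $t$ options of such an $n$. If $R(n)=wd^{2s}$ with $d\le t-2$ and $w$ not ending in $d$, then reading off $g$-values via Theorem~\ref{t:easy} shows that every value in $\{0,1,\dots,t-2\}$ is the $g$-value of some option, while exactly one option $m$ has $g(m)\in\{t-1,t\}$: explicitly $m=n-(d+1)$ when $d\ge1$ and $m=n-1$ when $d=0$, and after peeling off via Lemma~\ref{l:delete} any excess trailing $(t-1)$'s it created, the representation of $m$ has a single trailing $t-1$. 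Hence $g(S_n)=\{0,\dots,t-2\}\cup\{g(m)\}$, and the mex rule gives $g(n)=(2t-1)-g(m)$. The analogous analysis for $R(n)=w''c^{\rho}(t-1)$ with $c\ne t-1$ shows the unique critical option is $\lfloor n/t\rfloor$ (representation $w''c^{\rho}$) when $\rho$ is even and $n-(t-1-c)$ (representation $w''c^{\rho+1}$) when $\rho$ is odd, and again $g(n)=(2t-1)-g(m)$; the sole base case is $R(n)=(t-1)$, where the mex rule gives $g(n)=t-1$ directly. Each use of Lemma~\ref{l:delete} in this process adds a further deterministic parity flip to be tracked.

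For termination and the running time I would track the length $L$ of $R(n)$. The critical-option map never increases $L$: in the $\rho$-even and $d=0$ sub-cases (after a Lemma~\ref{l:delete} strip) $L$ drops by at least one, while in the other sub-cases $L$ is unchanged. To bound the length-preserving steps, I would follow the digit sitting just to the left of the trailing $t-1$ (respectively the trailing even run) and show it strictly decreases at each such step; since it lies in $\{0,\dots,t-1\}$, at most $O(t)$ length-preserving steps can occur before $L$ drops. The recursion therefore performs $O(t\log_t n)$ steps, each of which is $O(\log_t n)$ base-$t$ digit operations (a borrow in $n-i$, a division, and the $(t-1)$-stripping of Lemma~\ref{l:delete}), for a total of $O(\log^2 n)$ time once $t$ is fixed.

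The main obstacle I expect is the case bookkeeping behind the critical-option claim: in each of the two forms one must verify, across the sub-cases ($s=1$ versus $s\ge2$, $d=0$ versus $d\ge1$, $\rho$ even versus odd, and the degenerate possibilities where $w$ is empty or where a borrow creates a digit equal to a neighbor), that precisely one option escapes Theorem~\ref{t:easy} and that it has exactly the claimed representation. Closely tied to this is pinning down the monotone digit-valued potential that forces the length-preserving chains to be short rather than merely finite.
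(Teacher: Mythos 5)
Your proposal is correct in substance and reaches the same destination as the paper, but by a finer-grained recursion, so a comparison is worth making. The paper also reduces everything to positions with $g$-value in $\{t-1,t\}$ and exploits the fact that adjacent such positions must have opposite $g$-values; however, instead of stepping to one critical option at a time, it strings together an entire descent chain $wi^rk(t-1)\to wi^rk^2\to wi^r(k-1)(t-1)\to\cdots\to wi^r0^2\to wi^{r-1}(i-1)(t-1)^2$ in a single recursive call, resolving the whole $k, k-1, \ldots, 0$ cascade by one parity count. This buys a cleaner termination argument: the representation length drops after every two calls, giving $O(\ell)$ calls and $O(\ell^2)$ time with no auxiliary potential. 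Your one-option-at-a-time version needs exactly the extra digit-valued potential you describe to bound the length-preserving chains, yielding $O(t\ell)$ calls and $O(t\ell^2)$ time --- still quadratic for fixed $t$, so the theorem is unharmed, but the bookkeeping is heavier. Two smaller remarks. First, your full mex analysis (that all of $\{0,\dots,t-2\}$ occur among the options and that the critical option is \emph{unique}) is more than you need: once Theorem~\ref{t:easy} guarantees $g(n)\in\{t-1,t\}$, the relation $g(n)=(2t-1)-g(m)$ follows for \emph{any} option $m$ with $g(m)\in\{t-1,t\}$, since $g(m)\ne g(n)$; this is the shortcut the paper uses, and adopting it would eliminate most of the case bookkeeping you flag as the main obstacle. (Your digit analysis of the options is nonetheless correct as far as I can check it.) Second, your potential should be stated as the run's digit $d$ itself in the even-run form and the digit left of the trailing $t-1$ in the other form --- the quantity that decreases by one every two length-preserving steps --- rather than uniformly ``the digit to the left''; as written the even-run half of the definition tracks the wrong digit, though the intended argument clearly goes through.
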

\begin{proof}
Our algorithm is recursive.
The base cases are given by $g(n) = t-1$ whenever $R(n) = t-1$ or
$R(n) = k(t-1)$ for some $k < t-1$.
Let $\ell$ be the length of $R(n)$.  We have three cases. The first
two cases correspond to $R(n) = wi^rk(t-1)$ for $r > 0$, depending on
if $i \ge k$ or $i < k$. The third case corresponds to $R(n) =
wk^{2j}$ with $j > 0$.

\begin{enumerate}[(i)]
\item%
$i \ge k$: Suppose $i > k$.
Consider the following sequence of moves (positions written in $t$-ary):
$$ 
wi^rk(t-1) \to wi^rk^2 \to wi^r(k-1)(t-1) \to wi^r(k-1)^2 \to
\cdots \to wi^r0^2 \to wi^{r-1}(i-1)(t-1)^2.
$$
By making this sequence of moves, we stay in positions with
$g$-values in $\{t-1,t\}$, so the $g$-values must alternate. A simple
parity check shows that the $g$-values of the initial and final
positions in the sequence match.  By Lemma~\ref{l:delete}, the final
position's $g$-value disagrees with that of
$wi^{r-1}(i-1)(t-1)$. Furthermore, the length of $wi^{r-1}(i-1)(t-1)$
is $\ell - 1$. We can then recursively run on algorithm on
$wi^{r-1}(i-1)(t-1)$, which is back to case (i) with length~$\ell -
1$.

If $i = k$, the above still works if $r$ is even. If $r$ is odd, then
our initial string was $wk^{r+1}(t-1)$, and so its $g$-value disagrees
with that of $wk^{r+1}$, on which we can recursively run our algorithm
in case (iii) with input length $\ell - 1$.

\item%
$i < k$: Consider the following sequence of moves (positions written
  in $t$-ary):
$$
wi^rk(t-1) \to wi^rk^2 \to wi^r(k-1)(t-1) \to wi^r(k-1)^2 \to \cdots \to wi^{r+1}(t-1)
$$
By making this sequence of moves, we stay in positions with
$g$-values in $\{t-1,t\}$, so the $g$-values must alternate. A simple
party check shows that the $g$-values of the initial and final
positions in the sequence match.  Note that we can move to either
$wi^{r+2}$ and $wi^{r+1}$ from the final position. If $r$ is odd, then
$wi^{r+1}$ has an even number of trailing $i$'s, and the $g$-value of
our initial position disagrees with that of $wi^{r+1}$, which we can
find by recursively running on algorithm in case (iii) with input
length $\ell-1$.  If $r$ is even, then we do the same thing except
with $wi^{r+2}$ which is case (iii) with input length $\ell$.

\item%
$R(n) = wk^{2j}$: Note that from this position we can move to
  $wk^{2j-2}(k-1)(t-1)$, whose $g$-value must differ from that of $n$.
  If $k \ne 0$, then this is just case (i) with input length $\ell$
  and we can recurse.  If $k = 0$, then $R(n) = ui0^{2j}$ for some $i
  > 0$. Then we can move to $u(i-1)(t-1)^{2j}$, which switches the
  $g$-value. By Lemma~\ref{l:delete}, deleting until we have
  $u(i-1)(t-1)$ switches the $g$-value back, and we have case (i) or
  (ii) with input length $\ell - (2j-1)$.
\end{enumerate}

For the time complexity, it is straightforward to verify that each
recursive call runs in $O(\ell)$ time regardless of the case, so it
suffices to show that the algorithm terminates after $O(\ell)$
recursive calls. From (i), the input length decreases. From (ii), we
go to (iii). From (iii), we go to (i) or we go to (ii) with decreased
input length. Therefore, the input length is guaranteed to decrease
after every $2$ recursive calls, and so there are at most $2\ell =
O(\ell)$ recursive calls.

\end{proof}

\begin{cor}
There is a quadratic time algorithm for computing $g(n)$ for any $n$
in \textsc{Mark}-$t$.
\end{cor}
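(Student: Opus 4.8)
The plan is to combine Theorem~\ref{t:easy}, Lemma~\ref{l:delete}, and Theorem~\ref{t:hard} into a single case split governed by the terminal run of equal digits in the base-$t$ expansion $R(n)$. Two trivialities are disposed of first: $g(0)=0$, and if $t=2$ we fall back on Fraenkel's linear-time formula recalled in the introduction. So assume $t\ge 3$ and $n\ge 1$, compute $R(n)$, let $\ell$ be its length, and let its last maximal block consist of the digit $k$ repeated $r\ge 1$ times.

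Next I would dispatch on $(k,r)$. (i) If $k\le t-2$ and $r$ is odd, then $R(n)$ has an odd number of trailing $k$'s, so Theorem~\ref{t:easy} yields $g(n)=k$ with no further work. (ii) If $k\le t-2$ and $r$ is even, then $n$ ends in a positive even number of $k$'s with $k\ne t-1$, which is exactly the hypothesis of Theorem~\ref{t:hard}, so I run that algorithm. (iii) If $k=t-1$ and $r=1$, then $n$ ends in a single $t-1$, again covered by Theorem~\ref{t:hard}. (iv) If $k=t-1$ and $r\ge 2$, I strip the trailing $(t-1)$'s down to a single one: writing $R(n)=wk'(t-1)^r$ with $k'\ne t-1$ when such a digit exists, Lemma~\ref{l:delete} tells me that $g(n)$ equals $g(m)$ for $R(m)=wk'(t-1)$ when $r$ is odd and equals the other member of $\{t-1,t\}$ when $r$ is even; I then compute $g(m)$ by case (iii) and apply the parity correction. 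If no such $k'$ exists, i.e.\ $R(n)=(t-1)^r$, the same ``divide by $t$'' alternation reduces to the base case $R=(t-1)$, giving $g(n)=t-1$ for $r$ odd and $g(n)=t$ for $r$ even.

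For the running time I would observe that case (i) and the reductions in case (iv) are $O(\ell)$, while cases (ii) and (iii) invoke Theorem~\ref{t:hard}, which runs in $O(\ell^2)$; since converting between $n$ and $R(n)$ is also quadratic, every branch is quadratic in the input length $\ell=\Theta(\log n)$. The one point requiring care is bookkeeping rather than mathematics: I must check that cases (i)--(iv) are genuinely exhaustive --- they are, since for $n\ge 1$ the string $R(n)$ is nonempty and its terminal block has a well-defined digit and length, and the digit $k$ either satisfies $k\le t-2$ or equals $t-1$ --- and I must treat the boundary instance $R(n)=(t-1)^r$, which is not literally an instance of the hypothesis of Lemma~\ref{l:delete}, by the identical alternation argument. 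I expect verifying exhaustiveness and getting the parity corrections in case (iv) exactly right to be the only places an off-by-one error could creep in; there is no substantive obstacle, since the correctness and complexity of the three cited results are already in hand.
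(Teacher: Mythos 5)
Your proposal is correct and follows essentially the same route as the paper's own one-line proof: apply Theorem~\ref{t:easy} when the trailing run is an odd number of $k$'s with $k\le t-2$, otherwise strip extra trailing $(t-1)$'s via Lemma~\ref{l:delete} (with the parity flip) and invoke Theorem~\ref{t:hard}. Your explicit treatment of the boundary case $R(n)=(t-1)^r$, which Lemma~\ref{l:delete} does not literally cover, is a worthwhile extra detail that the paper glosses over.
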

\begin{proof}
Use Theorem~\ref{t:easy} if $R(n)$ has an odd number of trailing
$k$'s, otherwise delete the $j$ extra $(t-1)$'s beyond the first and
use Theorem~\ref{t:hard}, flipping the result if $j$ is odd.
\end{proof}

%%%%%%%%%%%%%%%%%%%%%%%%%%%%%%%%%%%%%%%%%%%%%%%%%%%%%%%%%%%%%%%%%%%%%%%%

\section{Mis\`ere Mark-$t$}\label{s:misere}

Let $D$ denote the set of dopey binary numbers, numbers whose binary
representations end in an odd number of 0's, and let $V$ denote their
complement, the \emph{vile numbers} (numbers whose binary
representations end in an even number of 0's). If we swap the powers
of $2$ in these sets to construct new sets $D'$ and $V'$, that is,
\begin{eqnarray*}
D' &=& (D \setminus \{2^{2k+1} : k \ge 0 \}) \cup \{2^{2k} : k \ge 0 \} \\
V' &=& (V \setminus \{2^{2k} : k \ge 0\}) \cup \{2^{2k+1} : k \ge 0\},
\end{eqnarray*}
then it is shown in~\cite{fraenkel} that the set of P- and N-positions for
\textsc{MiMark} are precisely $D'$ and $V'$ respectively. In this
section, we generalize this result to \textsc{MiMark}-$t$.

Let $D_t $ denote the set of dopey numbers in base~$t$, and let $V_t$
denote the set of vile numbers in base~$t$. Define
\begin{eqnarray*}
D'_t &=& (D_t \setminus \{t^{2k+1} : k \ge 0 \}) \cup \{t^{2k} : k \ge 0 \} \\
V'_t &=& (V_t \setminus \{t^{2k} : k \ge 0\}) \cup \{t^{2k+1} : k \ge 0\}.
\end{eqnarray*}

\begin{thm}
The P- and N-positions for \textsc{MiMark}-$t$ are precisely $D'_t$
and $V'_t$, respectively.
\end{thm}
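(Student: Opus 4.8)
The plan is to mimic the structure used for normal-play \textsc{Mark} in~\cite{fraenkel}: show that $D'_t$ is exactly the set of positions from which every move lands in $V'_t$, and that from every position in $V'_t$ there is a move into $D'_t$. Since mis\`ere play reverses only the terminal condition, and the unique terminal position is $0$, I would first check that $0 \in V'_t$ (which holds, since $0 = t^{0}$ has even exponent? — actually $0$ is dopey with zero trailing nonzero digits, so one must verify directly that $0$ lies in $V'_t$ under the convention used, and that $0$ is an N-position in mis\`ere play, being a first-player win). Then the characterization follows by the standard backward induction: a position is a mis\`ere P-position iff it is terminal-losing-for-mover or all its options are N-positions, and an N-position iff it has an option that is a P-position.

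\textbf{Key steps.} First I would recall from the normal-play analysis (the dopey-number characterization quoted before Theorem~\ref{t:easy}) that $D_t$ is the set of positions all of whose \textsc{Mark}-$t$ options lie in $V_t$, and $V_t$ is the set with an option in $D_t$. Next I would analyze exactly how the ``swap'' at the pure powers $t^m$ changes things. The only positions whose membership differs between $D_t, V_t$ and $D'_t, V'_t$ are the powers $t^m$: an odd power $t^{2k+1}$ (dopey, so normally P) is moved into $V'_t$, and an even power $t^{2k}$ (vile, so normally N) is moved into $D'_t$. So I must re-examine precisely those positions, together with positions that have a power of $t$ as an option — namely numbers of the form $t^m + i$ for $1 \le i \le t-1$ (which can subtract down to $t^m$) and numbers of the form $t^{m+1}$ itself (which divides down to $t^m$), and also small numbers near $0$. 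For each such position I would verify: (a) if it lies in $D'_t$, every option lies in $V'_t$; (b) if it lies in $V'_t$, some option lies in $D'_t$. For positions untouched by the swap and with no swapped option, the normal-play argument carries over verbatim.

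\textbf{The main obstacle} will be the careful bookkeeping at the powers of $t$ and their neighbors, since the swap can simultaneously remove a position's ``good move'' (if that move was to an odd power $t^{2k+1}$, now in $V'_t$) and supply a new one (if the position itself or another option is an even power, now in $D'_t$). Concretely: for $n = t^{2k+1}$, now required to be in $V'_t$, I need an option in $D'_t$; its options are $t^{2k+1} - i$ for $1\le i\le t-1$ (these end in $(t-1)$'s, hence are vile and not powers, so in $V_t \cap V'_t$ — no good) and $\lfloor t^{2k+1}/t\rfloor = t^{2k}$, which is now in $D'_t$ — so dividing works. For $n = t^{2k}$ with $k \ge 1$, now required to be in $D'_t$, I need all options in $V'_t$: the options $t^{2k}-i$ are vile non-powers (fine), and $t^{2k-1}$ is an odd power, now in $V'_t$ (fine) — so this checks out. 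For $n = t^{2k+1} + i$ with $1 \le i \le t-1$: in $V_t$ (ends in nonzero digit $i$, so vile), stays in $V'_t$; its normal good move might have been the subtraction to $t^{2k+1}$, which is no longer in $D'_t$, so I must exhibit another option in $D'_t$ — and indeed it can subtract down to $t^{2k+1}+1$? no; rather one checks it has an option ending in an odd number of $0$'s that is not a swapped power, or else the option $t^{2k}$ is unreachable — here the resolution is that $R(t^{2k+1}+i) = 1\,0^{2k}\,i$ is already vile with the needed move structure unchanged except at the one bad option, and a direct check shows another legal move reaches $D'_t$. I would tabulate these few families explicitly, handle the genuinely small cases $n < t^2$ by hand, and then invoke the induction. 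I expect the argument to be entirely elementary once the power-of-$t$ neighborhoods are organized, with no step requiring more than digit-wise case analysis in base~$t$.
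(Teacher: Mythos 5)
Your route is genuinely different from the paper's. The paper simply verifies, by a direct digit-pattern case analysis, that every move from a position in $D'_t$ lands in $V'_t$ and that every position in $V'_t$ has a move into $D'_t$, without invoking the normal-play result beyond motivation. You instead treat $D'_t, V'_t$ as a perturbation of $D_t, V_t$: you import the normal-play characterization (dopey $\Rightarrow$ all options vile; vile $\Rightarrow$ some option dopey) wholesale, observe that membership changes only at the powers $t^m$, and reduce the problem to checking the swapped positions and the positions having a swapped power as an option. This reduction is correct and complete: the only positions that can reach a power of $t$ are $t^m+i$ ($1\le i\le t-1$, by subtraction) and $t^{m+1}+j$ ($0\le j\le t-1$, by division), and one can check that the $D'_t$-side of the perturbation is in fact automatic (no dopey non-power has an even power of $t$ among its options), so all the real work sits in three families on the $V'_t$ side. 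Your approach buys economy — it reuses the cited result and isolates exactly where the mis\`ere swap matters — at the cost of being less self-contained than the paper's verification.

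The gap is that you do not actually carry out the checks at precisely the positions where the swap bites. For $n=t^{2k+1}+i$ your text trails off ("a direct check shows another legal move reaches $D'_t$") without exhibiting the move; the resolution is that $\lfloor (t^{2k+1}+i)/t\rfloor=t^{2k}$, which after the swap lies in $D'_t$, so the \emph{division} move is the winning one. More seriously, you omit the symmetric family $n=t^{2k}+i$ with $k\ge 1$: here the normal-play good move is division to the odd power $t^{2k-1}$, which the swap has removed from $D'_t$, and the replacement move is \emph{subtraction} by $i$ to reach $t^{2k}\in D'_t$. Until both families are checked, the $V'_t\to D'_t$ half of the argument is incomplete at exactly the positions your reduction flags as needing attention. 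Finally, you correctly sense that the status of $0$ is delicate: $0$ must be an N-position in mis\`ere play (the player to move cannot move and hence wins), so the convention must place $0$ in $V'_t$ (e.g., by taking its representation to be the empty string, with zero trailing $0$'s); this needs to be stated once explicitly, since the base case $1\in D'_t$ (all options of $1$ lead to $0$) rests on it.
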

\begin{proof}
It suffices to show that: I. A player moving from any position in
$D'_t$ always lands in a position in $V'_t$; II. Given any position in
$V'_t$, there exists a move to a position in $D'_t$.
\begin{enumerate}[I.]
\item%
Let $d \in D'_t$. We have two cases:

\begin{enumerate}[(i)]
\item%
$R(d) = wi0^{2k+1}$, where $w$ is a (possibly empty) $t$-ary string
  and $i > 0$.  All subtracting moves result in the form
  $w(i-1)(t-1)^{2k}j$ for some $j = 1,2,\ldots,t-1$, which lies in
  $V'_t$. The division move results in $wi0^{2k}$, which also lies in
  $V'_t$.

\item%
$R(d) = 10^{2k}$. The base case $k = 0$ is true since $1$ can only
  move to $0$, which is an N-position, so assume $k > 0$. Then any
  subtraction move results in the form $0(t-1)^{2k-1}j$ for some $j =
  1,2,\ldots,t-1$, which lies in $V'_t$. The division move results in
  $10^{2k-1}$, which also lies in $V'_t$.

\end{enumerate}

\item%
Let $v \in V'_t$. We again have two cases:

\begin{enumerate}[(i)]
\item%
$R(v) = wi0^{2k}$, where $w$ is a (possibly empty) $t$-ary string and
  $i > 0$.  If $k > 0$, we can divide by $t$ to move to $wi0^{2k-1}$,
  which lies in $D'_t$, so suppose $k = 0$, i.e. $R(v) = wi$. If $w$
  does not end with 0, then we can subtract by $i$ to move to $w0$
  which lies in $D'_t$, so suppose $w = u0^r$ and hence $R(v) =
  u0^ri$. If $r$ is even, we can subtract by $i$ to move to
  $u0^{r+1}$, which lies in $D'_t$. If $r$ is odd, we can divide by
  $t$ to move to $u0^r$, which lies in $D'_t$.

\item%
$R(v) = 10^{2k+1}$. Dividing by $t$ moves us to $10^{2k}$, which lies
  in $D'_t$.

\end{enumerate}

\end{enumerate}
\end{proof}

%%%%%%%%%%%%%%%%%%%%%%%%%%%%%%%%%%%%%%%%%%%%%%%%%%%%%%%%%%%%%%%%%%%%%%%%

%%%%%%%%%%%%%%%%%%%%%%%%%%%%%%%%%%%%%%%%%%%%%%%%%%%%%%%%%%%%%%%%%%%%%%%%

\end{document}